\def\ad{\text{ad}}
\def\C{\mathbb{C}}
\def\Z{\mathbb{Z}}
\newcommand{\op}{\oplus}
\newcommand{\mf}{\mathfrak}
\newcommand{\dis}{\displaystyle}
\newcommand{\bgop}{\bigoplus}
\newcommand{\ot}{\otimes}
\newcommand{\al}{\alpha}
\newcommand{\ga}{\gamma}
\newcommand{\wtil}{\widetilde}
\newcommand{\la}{\lambda}
\newcommand{\de}{\delta}
\newcommand{\what}{\widehat}
\newtheorem{theorem}{Theorem}[section]
\newtheorem{definition}[theorem]{Definition}
\newtheorem{lemma}[theorem]{Lemma}
\newtheorem{corollary}[theorem]{Corollary}
\newtheorem{proposition}[theorem]{Proposition}
\numberwithin{equation}{section}
\begin{document}
	\title{Derivations and Biderivations of affine-Virasoro Lie algebras }

	\author[]{Priyanshu Chakraborty, Yufeng Yao, Kaiming Zhao}
	\address{Priyanshu Chakraborty: School of Mathematical Sciences, Ministry of Education Key Laboratory of Mathematics and Engineering Applications and Shanghai Key Laboratory of PMMP,
		East China Normal University, No. 500 Dongchuan Rd., Shanghai 200241, China.}
	\email{priyanshu@math.ecnu.edu.cn, priyanshuc437@gmail.com}
	\address{Yufeng Yao: Department of Mathematics
		Shanghai Maritime University
		No. 1550 Haigang Ave., Shanghai 201306, China}\email{yfyao@shmtu.edu.cn}
	\address{ Kaiming Zhao: Department of Mathematics, Wilfrid Laurier University, Waterloo, ON, Canada N2L3C5}
	\email{kzhao@wlu.ca}
	\subjclass[2020]{17B67, 17B68}
	
	\keywords{affine Kac-Moody algebra, Virasoro algebra, affine-Virasoro Lie algebra, derivation, biderivation}
	\date{}
	
	\maketitle
	\begin{abstract}
		In this paper we determine all derivations and biderivations of an affine-Virasoro Lie algebra associated with a finite-dimensional complex simple Lie algebra $\mf g$. We prove that all the derivations and biderivations of affine-Virasoro Lie algebras are inner.
	\end{abstract}
	
	\section{Introduction}

Derivations and biderivations play a crucial role in understanding the structure of various algebras, as explored in articles \cite{B,MB,MB1,BMM,BZ}. Notably, Bresar et al. introduced the concept of biderivations for rings in \cite{BMM}, demonstrating that all biderivations of non-commutative prime rings are inner. Subsequently, biderivations for several Lie algebras were completely  determined. In particular, in \cite{WYC} it was shown that skew-symmetric biderivations of finite-dimensional Lie algebras are inner. In \cite{XT2}, the authors extended this result, proving that symmetric biderivations of finite-dimensional simple Lie algebras are all trivial. Recently, significant attention has been devoted to studying biderivations of Lie and Lie super-algebras (see \cite{LLZ,XT,XT1,XT2,WYC,WY,MB,CYZ}). It is well-known that every biderivation can be expressed as the sum of a symmetric and a skew-symmetric biderivation. In \cite{BZ1}, the authors developed methods to investigate skew-symmetric biderivations for Lie algebras without centers, but specific approaches for symmetric biderivations remain limited. In \cite{WY}, it was shown that skew-symmetric biderivations of affine-Virasoro Lie algebra of type $A_1$ are inner.
In the present  short paper, we efficiently determine derivations as well as both symmetric and skew-symmetric biderivations of affine-Virasoro Lie algebras asociatied with any finite dimensional simple Lie algebra $\mf g$. As an application, we determine the commutatiassociateve post-Lie algebra structure on $ \mathfrak{L}(\mathfrak{g})$.

Post-Lie algebras represent a significant generalization of left-symmetric algebras, with important applications in algebra and geometry \cite{DB}. They have also been explored in contexts such as isospectral flows, Yang-Baxter equations, Lie-Butcher series, and moving frames \cite{FLMM}. Constructing non-trivial post-Lie algebra structures for a given pair of Lie algebras is challenging. In \cite{BD}, the authors introduced a specialized class of post-Lie algebra structures known as commutative post-Lie algebras. It was proven in \cite{BM} that any commutative post-Lie algebra structure on a complex, finite-dimensional perfect Lie algebra is trivial. Furthermore, in \cite{BZ}, it was shown that commutative post-Lie algebra structures on Kac-Moody Lie algebras are nearly trivial.

This paper is organized as follows. In section 2, we recall the definition of affine-Virasoro Lie algebra $\mf L(\mf g)$ associated with a finite dimensional simple Lie algebra $\mf g$. We  use the Hochschild-Serre spectral sequence to compute the derivation algebras of affine-Virasoro Lie algebras. It is shown that any derivation of affine-Virasoro Lie algebra is inner.
In section 3, we explicitly determine skew-symmetric and symmetric biderivations of affine-Virasoro Lie algebra $\mf L(\mf g)$, respectively.  
More precisely, it is shown that each skew-symmetric biderivation of $\mf L(\mf g)$ is inner, and each symmetric biderivation of $\mf L(\mf g)$ is trivial. Consequently, any biderivation of $\mf L(\mf g)$ is inner. In Section 4, as an application, we show that every commutative post-Lie algebra structure on $\mf L(\mf g)$ is trivial.	

\section{Derivations of affine-Virasoro Lie algebras}

Throughout this paper, $\Z$, $\C$, and $\C^*$ denote the sets of integers, complex numbers, and nonzero complex numbers, respectively. Moreover all   vector spaces, algebras, and tensor products are over $\C$.

\subsection{ Affine-Virasoro Lie algebras} Let $\mf g$ be a finite dimensional complex simple Lie algebra with the non-degenerate Killing form $(.)$. Let $\Phi$ denote the root system of $\mf g$. Let $\widehat {\mf g} = \mf g \ot \C[t^{\pm1}]$ be the loop algebra of $\mf g$ with the bracket operation defined by:
	$$ [x\otimes t^m,y\otimes t^n]=[x,y]\otimes t^{m+n},$$
	for all $x,y \in \mf g, m,n \in \Z.$ It is clear that $\what {\mf g}= \dis{\bgop_{n \in \Z}} \what{\mf g}_n,$ where $\what{\mf g}_n=\mf g \ot t^n$ for all $n \in \Z$.\\
	Now we define the affine-Viraosro Lie algebra $\mf L(\mf g)$. As a vector space: $$\mf L(\mf g)= \mf g\otimes  \C[t^{\pm1}]\oplus \C {K_1} \op \C {K_2} \op \; \dis{\bgop_{m \in \Z}} \C d_m,$$
	on which Lie brackets are given by:
	$$[x\otimes t^m,y\otimes t^n]=[x,y]\otimes t^{m+n}+m(x,y)\delta_{m+n,0}{K_1},$$
	$$[\mf L(\mf g), {K_1}]= [\mf L(\mf g), {K_2}]=0,$$
	$$ [d_m, x\otimes t^n]=nx\otimes t^{m+n},$$
	$$[d_m,d_n]=(n-m)d_{m+n}+\delta_{m+n,0}\frac{m^3-m}{12}{K_2},$$
	for all $x,y \in \mf g, m,n \in \Z.$ It is clear that $\mf L(\mf g)=\bigoplus_{n\in\Z}\mf L(\mf g)_n$ is $\Z$-graded with $\mf L(\mf g)_n=\dis\mf g \ot t^n \op \C d_n$, for all $n \neq 0$ and $\mf L(\mf g)_0=\mf g \op {\rm span}_{\C} \{  d_0, K_1,K_2 \}$, and $\mf L(\mf g)$ has center $Z(\mf L(\mf g))=\C K_1+\C K_2$.  Note that $\dis{\bgop_{m \in \Z}} \C d_m \op \C K_2$ is the classical Virasoro Lie algebra, we denote it by Vir.
	
	\subsection{Derivations of a Lie algebra} Let $G$ be a commutative group and $\mf G = \bgop_{g \in G} \mf G_g$ be a $G$-graded Lie algebra. A $\mf G$-module
	$V$ is called $G$-graded, if
	$V = \bgop_{g \in G}V_g$ such that
	$\mf G_g.V_h \subseteq V_{g+h} $ for all $g, h \in  G$.\\
	Let $\mf G$ be a Lie algebra and V be a $\mf G$-module. A linear map $D:\mf G  \to V$ is said to be a
	derivation from $\mf G$ to $V$, if $D$ satisfies the following property:
	$$D[x, y] = x . D(y) -y . D(x), \, \forall x,y \in \mf G.$$
	If there exists some $v \in V$ such that $D(x)= x.v$ for all $x \in \mf G$, then $D$ is called an inner derivation.
	Denote by ${\rm Der}(\mf G ,V)$ the vector space of all derivations and ${\rm Inn}(\mf G,V)$ the vector space of all inner derivations. Set
	$H^1(\mf G,V)= {\rm Der}(\mf G,V)/{\rm Inn}(\mf G, V)$.
	Let Der$(\mf G, \mf G)={\rm Der}(\mf G)$  denote the derivation algebra of $\mf G$ and  Inn$(\mf G, \mf G)={\rm Inn}(\mf G)$ be the vector space of all inner derivations of $\mf G$.\\
	
	In this section we determine Der$(\mf L(\mf g)).$  Note that $\wtil{\mf g}= \what{\mf g} \op \C K_1$ is an ideal of $\mf L(\mf g)$. Now consider the following short exact sequence of Lie algebras,
	$$ 0 \rightarrow \wtil{ \mf g} \rightarrow \mf{L}(\mf g) \rightarrow{}  {{\mf L(\mf g)}/{\wtil{\mf g}}} \rightarrow 0 , $$
	which induces an exact sequence
	\begin{align}\label{exact seq}
		H^1(\mf L(\mf g), \wtil { \mf g}) \rightarrow H^1(\mf L(\mf g), \mf L(\mf g)) \rightarrow H^1(\mf L(\mf g), {{\mf L(\mf g)}/{\wtil{\mf g}}}).
	\end{align}
	The right-hand side of the sequence (\ref{exact seq}) can be computed from the initial terms of the four-term sequence associated to the Hochschild-Serre spectral sequence (see \cite[\S 7.5]{WC}) given by:
	$$  0 \rightarrow H^1({{{\mf L(\mf g)}/{\wtil{\mf g}}}}, {{{\mf L(\mf g)}/{\wtil{\mf g}}}}) \rightarrow H^1(\mf L(\mf g), {{{\mf L(\mf g)}/{\wtil{\mf g}}}}) \rightarrow H^1(\wtil{ \mf g}, {{{\mf L(\mf g)}/{\wtil{\mf g}}}})^{{{\mf L(\mf g)}/{\wtil{\mf g}}}},$$
	where the term $H^1(\wtil{ \mf g}, {{{\mf L(\mf g)}/{\wtil{\mf g}}}})^{{{\mf L(\mf g)}/{\wtil{\mf g}}}}$ is equal to ${\rm Hom}_{U({{\mf L(\mf g)}/{\wtil{\mf g}}})}(\wtil{ \mf g}/[\wtil{ \mf g},\wtil{ \mf g}], {{{\mf L(\mf g)}/{\wtil{\mf g}}}})=0,$ since $\wtil{ \mf g}$ is a perfect Lie algebra. Moreover it is known from \cite[Theorem A.2.1]{ES} that $H^1({\rm Vir}, {\rm Vir})=0,$ and hence $H^1(\mf L(\mf g), {{{\mf L(\mf g)}/{\wtil{\mf g}}}})=0.$ Therefore, to determine $H^1(\mf L(\mf g), \mf L(\mf g)) $, we need to compute $ H^1(\mf L(\mf g), \wtil{ \mf g})$. By Proposition 1.1 of \cite{FR}, we have the following.
	\begin{lemma}
		Keep notations as before. Then ${\rm Der}(\mf L(\mf g)), \wtil{ \mf g})=\dis{\bgop_{n \in \Z}}{\rm Der} (\mf L(\mf g), \wtil{ \mf g})_n$,
		where $${\rm Der} (\mf L(\mf g),\wtil{ \mf g})_n= \{ \phi \in {\rm Der}(\mf L(\mf g), \wtil{ \mf g})\mid \phi(\mf L(\mf g)_m) \subseteq  \wtil{ \mf g}_{m+n}, \, \forall \, m \in \Z\}.$$
	\end{lemma}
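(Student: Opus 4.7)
The plan is to apply Proposition 1.1 of \cite{FR} in the present graded setting. The prerequisites are immediate from Section 2.1: $\mf L(\mf g)=\bgop_{n\in\Z}\mf L(\mf g)_n$ is $\Z$-graded, and the ideal $\wtil{\mf g}=\what{\mf g}\op \C K_1$ is a $\Z$-graded $\mf L(\mf g)$-submodule of $\mf L(\mf g)$, with $\wtil{\mf g}_0=\mf g\op \C K_1$ and $\wtil{\mf g}_n=\mf g\ot t^n$ for $n\neq 0$.

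The underlying construction of the graded components of a derivation $\phi\in {\rm Der}(\mf L(\mf g), \wtil{\mf g})$ proceeds in the expected way. For each homogeneous $x\in \mf L(\mf g)_m$, I would write $\phi(x)=\sum_{k\in\Z}\phi(x)_k$ with $\phi(x)_k\in \wtil{\mf g}_k$, and define $\phi_n(x):=\phi(x)_{m+n}$, extended linearly to all of $\mf L(\mf g)$. The derivation property $\phi_n\in {\rm Der}(\mf L(\mf g), \wtil{\mf g})_n$ then follows by a direct degree-by-degree check: for homogeneous $x\in \mf L(\mf g)_m$ and $y\in \mf L(\mf g)_p$, both sides of
$$\phi_n([x,y])=[x,\phi_n(y)]+[\phi_n(x), y]$$
lie in $\wtil{\mf g}_{m+p+n}$, and the identity is obtained as the degree-$(m+p+n)$ projection of the derivation equation $\phi([x,y])=[x,\phi(y)]+[\phi(x),y]$, using that $\wtil{\mf g}$ is $\Z$-graded as an $\mf L(\mf g)$-module.

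The only subtle point is the direct-sum assertion itself. Linear independence of the subspaces ${\rm Der}(\mf L(\mf g), \wtil{\mf g})_n$ is automatic since derivations of different degrees take values in pairwise disjoint graded pieces of $\wtil{\mf g}$, so their intersections are zero. The genuine technical content, namely that the full space of derivations is recovered from its graded components $\phi_n$ defined above, is precisely what Proposition 1.1 of \cite{FR} provides once the $\Z$-graded structure of $\mf L(\mf g)$ and $\wtil{\mf g}$ is in place. Thus the main (and only) obstacle is verifying that FR's prerequisites apply, which they do by the preceding grading remarks, yielding the asserted decomposition.
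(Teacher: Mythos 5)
Your proposal takes exactly the paper's route: the paper gives no argument beyond the citation ``By Proposition 1.1 of \cite{FR}'', and your check of the $\Z$-grading of $\mf L(\mf g)$ and $\wtil{\mf g}$ together with the standard decomposition of a derivation into homogeneous components is precisely what that citation encodes. The one hypothesis of \cite[Proposition 1.1]{FR} you leave tacit is that $\mf L(\mf g)$ is finitely generated (e.g.\ by $\mf g\ot t^{\pm 1}$ and $d_{\pm 1}, d_{\pm 2}$); this, not the grading alone, is what forces each derivation to have only finitely many nonzero homogeneous components and hence makes the sum exhaust ${\rm Der}(\mf L(\mf g),\wtil{\mf g})$.
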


We need the following two lemmas.

	\begin{lemma}\label{Lem 1 for der}
		For any $m \in \Z$ with $m \neq 0$, $H^1(\mf L(\mf g)_0, \wtil{\mf g}_m)=0$.
	\end{lemma}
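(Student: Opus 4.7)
The plan is to reduce the problem to Whitehead's first lemma for the semisimple Lie algebra $\mf g$, plus a handful of elementary computations that use $m\neq 0$.

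First I would unpack the module structure. As an $\mf L(\mf g)_0$-module, $\wtil{\mf g}_m = \mf g \otimes t^m$ is finite-dimensional, with $\mf g$ acting by the adjoint action (since $[y, x\otimes t^m] = [y,x]\otimes t^m$), with $d_0$ acting as the scalar $m$ (since $[d_0, x\otimes t^m]=m\,x\otimes t^m$), and with $K_1, K_2$ acting trivially (they are central). In particular, $\wtil{\mf g}_m$ is a finite-dimensional $\mf g$-module.

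Let $D \in \mathrm{Der}(\mf L(\mf g)_0, \wtil{\mf g}_m)$. Restrict $D$ to $\mf g$; by Whitehead's first lemma, $H^1(\mf g, \wtil{\mf g}_m)=0$, so there exists $v_0 \in \wtil{\mf g}_m$ with $D(x) = [x, v_0]$ for every $x \in \mf g$. Replacing $D$ by $D - \mathrm{ad}(v_0)$, I may assume that $D$ vanishes on $\mf g$. It then remains to compute $D(K_1)$, $D(K_2)$, and $D(d_0)$.

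For $i=1,2$: since $K_i$ is central in $\mf L(\mf g)_0$, applying $D$ to $[d_0, K_i]=0$ gives $d_0\cdot D(K_i) - K_i\cdot D(d_0) = 0$; the second term is $0$ because $K_i$ acts trivially on $\wtil{\mf g}_m$, while the first equals $m\,D(K_i)$. Since $m\neq 0$, we get $D(K_i)=0$. For $D(d_0)$: applying $D$ to $[d_0, x]=0$ for any $x\in\mf g$ and using $D(x)=0$ yields $x \cdot D(d_0) = 0$, i.e.\ $D(d_0) \in (\wtil{\mf g}_m)^{\mf g} \cong \mf g^{\mf g} = 0$ since $\mf g$ is semisimple with trivial center. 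Hence $D(d_0)=0$, so $D$ itself vanishes on all of $\mf L(\mf g)_0$. This proves that every derivation into $\wtil{\mf g}_m$ is inner, i.e.\ $H^1(\mf L(\mf g)_0, \wtil{\mf g}_m)=0$.

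No step presents a serious obstacle; the only subtle point is to notice that $m\neq 0$ is used twice, once to rescale the $d_0$-action and once (implicitly) in making Whitehead's lemma applicable without worrying about the central elements. If anything, one must be careful that the chosen inner correction $\mathrm{ad}(v_0)$ lands in $\wtil{\mf g}_m$ when applied to $d_0$, $K_1$, $K_2$ as well as to $\mf g$, which is immediate from the bracket formulas.
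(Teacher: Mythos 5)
Your argument is correct and follows essentially the same route as the paper: both reduce to $H^1(\mf g,\wtil{\mf g}_m)=0$ (Whitehead) to arrange $D|_{\mf g}=0$, and then kill the remaining values by elementary bracket computations. The only cosmetic difference is that the paper finishes with the single uniform identity $0=\phi([y,x])=[\phi(y),x]$ for all $y\in\mf L(\mf g)_0$, $x\in\mf g$ (using that $\wtil{\mf g}_m=\mf g\otimes t^m$ has trivial $\mf g$-centralizer for $m\neq 0$), whereas you treat $K_1$, $K_2$, $d_0$ case by case.
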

	\begin{proof}
	 Let $m \neq 0$ and $\phi: \mf L(\mf g)_0 \to \wtil{\mf g}_m$ be a derivation. Since $H^1(\mf g, \wtil {\mf g}_m)=0,$ we have $\phi|_{\mf g}$ is an inner derivation. Hence there exits $v \in \wtil{\mf g}_m$ such that $\phi|_{{\mf g}}(X)=[X,v]$ for all $ X \in {\mf g}$. Now set $\wtil\phi=\phi-\phi_0$, where $\phi_0: \mf L(\mf g)_0 \to \wtil{\mf g}_m$ is defined by $\phi_0(X)=[X,v]$ for $X \in  \mf L(\mf g)_0$. In particular, we get $\wtil \phi|_{\mf g}=0$. Thus we can assume that $\phi|_{\mf g} =0.$ Now consider
	 $$0=\phi([y,x])=[\phi(y),x]+[y, \phi(x)]=[\phi(y),x], \, \, \forall \, x \in \mf g, \, y \in  \mf L(\mf g)_0.$$
	Consequently, $\phi( \mf L(\mf g)_0)=0$, as desired.

	\end{proof}
	\begin{lemma}\label{Lem 2 for der}
	For any $m ,n\in \Z$ with $m \neq n$, ${\rm Hom}_{\mf L(\mf g)_0}(\mf L(\mf g)_m, \wtil{\mf g}_n)=0$.
	\end{lemma}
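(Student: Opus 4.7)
The plan is to exploit the fact that $d_0 \in \mf L(\mf g)_0$ acts diagonally on every graded piece through the grading, and that any $\mf L(\mf g)_0$-module map must commute with its action. Specifically, I would first recall from the defining brackets that $[d_0, x \ot t^k] = k(x \ot t^k)$ and $[d_0, d_k] = k d_k$, and that $d_0$ commutes with $K_1$ and $K_2$. From these formulas, $d_0$ acts as the scalar $k$ on the homogeneous component $\mf L(\mf g)_k$ for every $k \in \Z$, and likewise $d_0$ acts as the scalar $k$ on $\wtil{\mf g}_k$ (with the convention that on $\wtil{\mf g}_0 = \mf g \op \C K_1$ this scalar is $0$).

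Next, given any $\psi \in {\rm Hom}_{\mf L(\mf g)_0}(\mf L(\mf g)_m, \wtil{\mf g}_n)$, I would apply the module-map condition to $d_0 \in \mf L(\mf g)_0$: for every $v \in \mf L(\mf g)_m$,
\begin{equation*}
m\,\psi(v) \;=\; \psi([d_0, v]) \;=\; [d_0, \psi(v)] \;=\; n\,\psi(v).
\end{equation*}
Hence $(m-n)\psi(v) = 0$ for all $v$. Since $m \neq n$, this forces $\psi = 0$, which is exactly the claim.

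In short, the entire argument reduces to the observation that $d_0$ distinguishes distinct graded components by acting through different scalars, so there is no genuine obstacle here; the only small point to notice is the case $n=0$, where $\wtil{\mf g}_0$ also carries eigenvalue $0$ for $d_0$ (using $[d_0, K_1]=0$ and $[d_0, x]=0$ for $x \in \mf g$), so the argument still applies uniformly. The lemma will be used in the next subsection to kill off-diagonal derivations in the Hochschild--Serre analysis.
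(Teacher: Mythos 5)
Your proof is correct and follows essentially the same route as the paper: both arguments apply the $\mf L(\mf g)_0$-equivariance to $d_0$, which acts by the scalar $m$ on $\mf L(\mf g)_m$ and by $n$ on $\wtil{\mf g}_n$, forcing $(m-n)\psi=0$. The paper merely writes the $n=0$ case slightly differently by splitting off the $K_1$-component of $\psi(v)$, whereas you absorb it into the uniform observation that $d_0$ acts by $0$ on all of $\wtil{\mf g}_0$; the two treatments are equivalent.
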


	\begin{proof}
		Let $m ,n\in \Z$ with $m \neq n$, and $\phi \in {\rm Hom}_{\mf L(\mf g)_0}(\mf L(\mf g)_m, \wtil{\mf g}_n)$.

		Take any $X \in \mf L(\mf g)_m$ and let $\phi(X)= Y + a\de_{n,0}K_1,$ for some $Y \in {\mf g} \ot t^n$ and $a\in \C$. Since
	 \begin{align}
			m\phi(X)&=\phi([d_0,X])=[d_0,\phi(X)]=nY
		\end{align}
	It follows that $a=0$ when $n=0$, and hence we have $ (m-n)\phi(X)=0$, which yields that $\phi(X)=0$. This completes the proof.
	\end{proof}
	As a direct consequence of  Lemma \ref{Lem 1 for der}, Lemma \ref{Lem 2 for der} and \cite[Proposition 1.2]{FR}, we have
	\begin{proposition}
Keep notations as before, then we have
 $${\rm Der}(\mf L(\mf g), \wtil{ \mf g})={\rm Der}(\mf L(\mf g), \wtil{ \mf g})_0 + {\rm Inn}(\mf L(\mf g), \wtil{ \mf g}).$$
	\end{proposition}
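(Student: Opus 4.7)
The plan is to exploit the grading decomposition ${\rm Der}(\mf L(\mf g), \wtil{\mf g}) = \bgop_{n \in \Z} {\rm Der}(\mf L(\mf g), \wtil{\mf g})_n$ furnished by the first lemma, and to show that every homogeneous component of nonzero degree is in fact inner; the degree-zero piece will then contribute the ${\rm Der}(\mf L(\mf g), \wtil{\mf g})_0$ summand on the right-hand side.

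Concretely, I would take $\phi \in {\rm Der}(\mf L(\mf g), \wtil{\mf g})_n$ with $n \neq 0$ and proceed in two steps. First, since $\phi$ restricts to a derivation from $\mf L(\mf g)_0$ into the $\mf L(\mf g)_0$-module $\wtil{\mf g}_n$, Lemma \ref{Lem 1 for der} furnishes a vector $v \in \wtil{\mf g}_n$ such that $\phi(x) = [x,v]$ for all $x \in \mf L(\mf g)_0$. Replacing $\phi$ by $\phi - \ad v$, which differs from it by a bona fide inner derivation of all of $\mf L(\mf g)$, I may assume $\phi|_{\mf L(\mf g)_0} = 0$. Second, with that restriction killed, the derivation identity applied to $x \in \mf L(\mf g)_0$ and $y \in \mf L(\mf g)_m$ collapses to $\phi([x,y]) = [x,\phi(y)]$, so $\phi$ restricted to $\mf L(\mf g)_m$ is an $\mf L(\mf g)_0$-module homomorphism into $\wtil{\mf g}_{m+n}$. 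Because $n \neq 0$ we have $m \neq m+n$, whence Lemma \ref{Lem 2 for der} forces this map to vanish for every $m$, and $\phi$ is identically zero. This shows the original $\phi$ was already inner, and assembling the homogeneous components yields the claimed decomposition.

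This is essentially the abstract mechanism of \cite[Proposition 1.2]{FR} specialized to our setting. I do not expect a substantial obstacle: the two vanishing ingredients, namely $H^1(\mf L(\mf g)_0, \wtil{\mf g}_n) = 0$ for $n \neq 0$ and ${\rm Hom}_{\mf L(\mf g)_0}(\mf L(\mf g)_m, \wtil{\mf g}_{m+n}) = 0$ for $n \neq 0$, are precisely the two preceding lemmas. The only minor point to be verified is that the vector $v$ produced by Lemma \ref{Lem 1 for der} genuinely lies in $\wtil{\mf g} \subset \mf L(\mf g)$, so that $\ad v$ is a legitimate inner derivation of the whole Lie algebra rather than merely of the subalgebra $\mf L(\mf g)_0$; this is immediate from the codomain of $\phi$.
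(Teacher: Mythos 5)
Your proposal is correct and follows essentially the same route as the paper: the paper simply deduces the proposition from Lemma \ref{Lem 1 for der}, Lemma \ref{Lem 2 for der} and \cite[Proposition 1.2]{FR}, and what you have written is exactly the standard argument behind that citation (kill a homogeneous derivation of nonzero degree on $\mf L(\mf g)_0$ by subtracting an inner one, then invoke the Hom-vanishing on the remaining graded pieces). No gaps.
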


We are now in a position to present the following result on derivations of $\mf L(\mf g)$.
	\begin{theorem}\label{Th derivation}
The following statements hold.
		\begin{itemize}
			\item[(1)] $H^1(\mf L(\mf g), \wtil{\mf g})=0$.
			\item[(2)] ${\rm Der}(\mf L(\mf g), \mf L(\mf g))={\rm Inn}(\mf L(\mf g)).$
		\end{itemize}
	\end{theorem}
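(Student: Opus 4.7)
By the proposition immediately above, (1) reduces to showing that every degree-zero graded derivation $\phi \in {\rm Der}(\mf L(\mf g), \wtil{\mf g})_0$ is inner; once (1) is in hand, (2) follows at once from the exact sequence (\ref{exact seq}) together with the vanishing $H^1(\mf L(\mf g), \mf L(\mf g)/\wtil{\mf g}) = 0$ already established via Hochschild--Serre.

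So fix $\phi \in {\rm Der}(\mf L(\mf g), \wtil{\mf g})_0$. Its restriction to $\mf g \subseteq \mf L(\mf g)_0$ lands in $\wtil{\mf g}_0 = \mf g \op \C K_1$; since $\mf g$ is perfect and acts trivially on $K_1$, the $\C K_1$-component of $\phi|_{\mf g}$ vanishes, so $\phi|_{\mf g}$ is a derivation of $\mf g$ into its adjoint module. By Whitehead's first lemma, $\phi|_{\mf g} = \ad(v_0)$ for some $v_0 \in \mf g$, and after subtracting the inner derivation $\ad(v_0)$ I may assume $\phi|_{\mf g} = 0$. For $n \neq 0$ the derivation identity applied to $[x, y \ot t^n]$ (with $x, y \in \mf g$) then shows that $\phi|_{\mf g \ot t^n}$ is a $\mf g$-module endomorphism of the adjoint representation, so Schur's lemma gives $\phi(x \ot t^n) = c_n \, x \ot t^n$ for some scalar $c_n \in \C$; similarly for $m \neq 0$ I write $\phi(d_m) = y_m \ot t^m$ with $y_m \in \mf g$. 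Centrality of $K_1, K_2$ forces $\phi(K_1), \phi(K_2) \in Z(\mf L(\mf g)) \cap \wtil{\mf g} = \C K_1$, and applying $\phi$ to $[d_0, x] = 0$ forces $\phi(d_0) \in \C K_1$ as well; write $\phi(K_1) = \beta_1 K_1$, $\phi(K_2) = \beta_2 K_1$, $\phi(d_0) = \gamma K_1$.

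It remains to force all these scalars and the $y_m$'s to vanish. Applying $\phi$ to $[d_m, x] = 0$ (for $x \in \mf g$, $m \neq 0$) gives $[y_m, x] = 0$, so $y_m \in Z(\mf g) = 0$. Applying $\phi$ to $[d_m, x \ot t^{-m}] = -m \, x$ (for $m \neq 0$) and using $\phi|_{\mf g} = 0$ together with $y_m = 0$ yields $0 = -m c_{-m} x$, hence $c_n = 0$ for all $n \neq 0$. Applying $\phi$ to $[x \ot t^m, y \ot t^{-m}] = [x,y] + m(x,y) K_1$ and comparing $K_1$-components then gives $m(x, y) \beta_1 = 0$, which forces $\beta_1 = 0$ by non-degeneracy of the Killing form. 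Finally, applying $\phi$ to $[d_m, d_{-m}] = -2m \, d_0 + \frac{m^3-m}{12} K_2$ at $m = 1$ and $m = 2$ produces a linear system in $(\gamma, \beta_2)$ whose only solution is $\gamma = \beta_2 = 0$. Hence the reduced $\phi$ vanishes identically and the original $\phi$ was inner, proving (1); part (2) follows from the exact sequence as noted.

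I expect the main obstacle to be purely bookkeeping: the derivation identity must be split into cases depending on which of $m + n$, $m$, $n$ is zero, but each case collapses once one invokes perfectness of $\mf g$, non-degeneracy of the Killing form, or the classical fact that an element of a simple Lie algebra whose adjoint acts as a scalar on $\mf g$ must be central and hence zero. No cohomological input beyond $H^1(\mf g, \mf g) = 0$ and the Hochschild--Serre setup already in the text is required.
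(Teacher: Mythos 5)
Your proposal is correct, and the overall skeleton (reduce to degree-zero derivations via the graded decomposition, then kill the derivation piece by piece, with (2) following from the exact sequence) matches the paper. The middle step, however, is genuinely different. The paper disposes of $D$ on the whole ideal $\wtil{\mf g}$ in one stroke by citing the known description ${\rm Der}(\wtil{\mf g})={\rm Inn}(\wtil{\mf g})\op\C\ga$ with $\ga=[d_0,\cdot]$, writes $D|_{\wtil{\mf g}}=\ad X+c\ga$, and subtracts $\ad X+c\,\ad d_0$; it then only has to check $D(d_m)$, $D(d_0)$ and $D(K_2)$. You instead rebuild this from scratch: Whitehead's lemma on $\mf g$, Schur's lemma on each $\mf g\ot t^n$ to get scalars $c_n$, and then the relations $[d_m,x\ot t^{-m}]=-mx$, $[x\ot t^m,y\ot t^{-m}]=[x,y]+m(x,y)K_1$ and $[d_m,d_{-m}]$ to force $c_n=\beta_1=\beta_2=\gamma=0$. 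Your route is longer but entirely self-contained, and it has one concrete advantage: it shows \emph{directly} that the would-be degree-derivation component (your $c_n$'s, the paper's $c$) is forced to vanish by the presence of the $d_m$'s. In the paper's reduction this point is glossed over --- $\ad d_0$ is not an inner derivation from $\mf L(\mf g)$ into $\wtil{\mf g}$ (indeed $[d_0,d_n]=nd_n\notin\wtil{\mf g}$), so subtracting $c\,\ad d_0$ is only legitimate for computing $H^1(\mf L(\mf g),\wtil{\mf g})$ once one knows $c=0$, which your Schur-plus-bracket computation establishes explicitly. All of your individual verifications (the vanishing of the $K_1$-component of $\phi|_{\mf g}$ by perfectness, $y_m\in Z(\mf g)=0$, $\beta_1=0$ from non-degeneracy of the Killing form, and the linear system at $m=1,2$ giving $\gamma=\beta_2=0$) check out.
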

	\begin{proof}
	By Proposition 2.4, to prove the statement (1), it is sufficient to show that Der$\mf L(\mf g), \wtil{ \mf g})_0\subseteq {\rm Inn}(\mf L(\mf g), \wtil{ \mf g})$. For that, let $$D \in {\rm Der}(\mf L(\mf g), \wtil{ \mf g})_0=\{ \phi \in {\rm Der}(\mf L(\mf g), \wtil{ \mf g})\mid\phi(\mf L(\mf g)_m) \subseteq  \wtil{ \mf g}_{m}, \, \forall \, m \in \Z\}.$$ Note that $D|_{\wtil {\mf g} }$ is a derivation of $\wtil{\mf g} $. It is well-known that Der$(\mf {\wtil g})= {\rm Inn}(\mf{\wtil g}) \op \C \ga,$ where $\ga:\mf {\wtil g} \to \mf {\wtil g}  $ is defined by $\ga(g)=[d_0,g]$ for all $ g \in \mf{\wtil g}$. Let $D|_{\mf {\wtil g}}=\ad X+c \ga  $, for some $c \in \C, X \in \mf{ \wtil g}$. Let $\phi:=D-\ad X-c\,\ad\, d_0$. Then it is clear that $\phi(Y)=0$ for all $Y \in \wtil{\mf g}.$ Hence, we can assume that $D|_{\wtil{\mf g} }=0$ without loss of generality. In the following, we always make this assumption. We have the following assertion.

{\bf Claim:} $D(d_m)=0$ for all $m \neq 0$.

Assume that $D(d_m)=X \ot t^m$ for some $X \in \mf g$. Since $D$ is a derivation, we have $$0=D([d_m,y])=[D(d_m),y]+[d_m,D(y)]
=[D(d_m),y], \, \, \forall\ \, y \in \mf g.$$
Therefore we have $[y, D(d_m) ]=[y,X]\ot t^m=0 $ for all $y \in \mf g$, i.e $X=0$ and hence the claim follows.\\
Now to prove $D(d_0)=0$, we consider the relation $2D(d_0)=D[d_{-1},d_1]=[D(d_{-1}),d_1]+[d_{-1},D(d_1)]=0,$ by above claim.\\
Now we need to show that $D(K_2)=0$. To prove this, consider the following relation $$D([d_m,d_{-m}])=[D(d_m),d_{-m}]+[d_m,D(d_{-m})]=0$$ for some $m \neq 1$, i.e, $D(2md_0+ \frac{m^3-m}{12}K_2) =0$. From this it follows that $D(K_2)=0$. This completes the proof of the statement (1).

Now the statement (2) follows directly from the exact sequence (\ref{exact seq}).
\end{proof}

\section{Biderivations of affine-Virasoro Lie algebras}
In this section, we determine biderivations of affine-Virasoro Lie algebras. We first recall the notion of biderivations.
	\begin{definition}
		Let $L$ be a Lie algebra and $V$ be an $L$-module. A bilinear map $F:  L \times  L \to  V$ is said to be a biderivation from $L$ to $V$ if it satisfies the following properties  for all $ \, x,y,z  \,  \in  L: $
		$$F([x,y],z)=x.F(y,z) - y.F(x,z) $$
		$$F(x,[y,z])=y.F(x,z)- z.F(x,y)  $$
		Further a biderivation from a Lie algebra $L$ to its adjoint representation is called a biderivation of $L$.
		
	\end{definition}
A biderivation $F$ is called symmetric (resp. skew-symmetric) if $F(x,y)=F(y,x)$ (resp. $F(x,y)=-F(y,x)$) for all $x,y \in L$. It is well-known that a biderivation of a Lie algebra $L$ can be expressed as a sum of symmetric biderivation and skew-symmetric biderivation. For any Lie algebra $L$ and $\la \in \C$, a natural biderivation $F_{\lambda}$ of $L$ is defined as  $F_\la(x,y)=\la [x,y]$ for any $x,y\in L$. We call biderivations of the form $F_\la$ as inner biderivations. In particular, for $\la=0$, $F_\la$ is the so-called trivial biderivation.

Note that for any biderivation $F$ of $ L$, we have $F(L, Z) \subseteq  Z$ and $F(Z, L) \subseteq  Z$, where $Z$ is the center of $L$, due to the following (\ref{center1}) and (\ref{center2}),
\begin{equation}\label{center1}
0=F(x,[y,z])=[F(x,y),z]+[y,F(x,z)] =[y,F(x,z)],  \, \forall \, x,y \in  L, \, z \in Z,
\end{equation}
and
\begin{equation}\label{center2}
0=F([x,y],z])=[x,F(y,z)]+[F(x,z),y] =[x,F(y,z)],  \, \forall \, x,z \in  L, \, y \in Z.
\end{equation}
It follows that it gives rise to an induced map $\bar F : L/Z \to L/Z$ defined by $\bar F(x+Z,y+Z)=F(x,y)+Z$. Clearly $\bar F$ is also a biderivation of $L/Z$. We have the following easy observation.

\begin{lemma}\label{biderivation with center}
Let $L$ be a perfect Lie algebra and $F$ be a biderivation of $L$. Then $F(L, Z)=F(Z, L)=0$.
\end{lemma}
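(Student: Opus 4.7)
The plan is to combine the two observations just above the lemma (that $F(L,Z)$ and $F(Z,L)$ both land inside the center $Z$) with the perfectness hypothesis $L=[L,L]$, and then apply the defining biderivation identities one more time.

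More precisely, by \eqref{center1} and \eqref{center2}, for every $x\in L$ and every $z\in Z$ we already know $F(x,z)\in Z$ and $F(z,x)\in Z$. I would then fix $z\in Z$ and write an arbitrary element of $L$ as a finite sum of brackets $[a,b]$ using $L=[L,L]$. For each such bracket, the first biderivation axiom gives
\[
F([a,b],z)=[a,F(b,z)]-[b,F(a,z)].
\]
Since $F(a,z),F(b,z)\in Z$, both bracket terms on the right vanish, so $F([a,b],z)=0$. By bilinearity this yields $F(L,z)=0$ for every $z\in Z$, i.e.\ $F(L,Z)=0$.

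Symmetrically, for $z\in Z$ and an arbitrary $[a,b]\in[L,L]=L$ the second biderivation axiom gives
\[
F(z,[a,b])=[a,F(z,b)]-[b,F(z,a)],
\]
and again $F(z,a),F(z,b)\in Z$ forces the right-hand side to be zero, so $F(z,L)=0$ and hence $F(Z,L)=0$.

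There is really no obstacle here: the only ingredients are the two displayed identities that were already recorded just before the lemma and the hypothesis $L=[L,L]$. The only small subtlety is making sure one invokes the correct biderivation identity for each of the two equalities (the first axiom for $F(L,Z)$ and the second for $F(Z,L)$), but once that is done, the centrality of $F(a,z)$ and $F(z,a)$ collapses each commutator to zero and the conclusion follows immediately.
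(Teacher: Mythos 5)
Your proof is correct and follows exactly the route the paper intends: the paper's own proof simply states that the lemma ``follows directly from (\ref{center1}) and (\ref{center2})'', and your argument spells out the implicit step, namely using $L=[L,L]$ to write every element as a sum of brackets and then applying the appropriate biderivation axiom once more so that the centrality of $F(a,z)$ and $F(z,a)$ kills each commutator.
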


\begin{proof}
The assertion follows direct from (\ref{center1}) and (\ref{center2}).
\end{proof}

%

The following result determines skew-biderivations of $\mf L(\mf g)/Z(\mf L(\mf g))$.

\begin{theorem}\label{skew-symmetric biderivation of L/Z}
	Every skew-symmetric biderivation of $\mf L(\mf g)/Z(\mf L(\mf g))$ is inner.
\end{theorem}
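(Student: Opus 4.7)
The plan is to exploit the $\Z$-grading of $\bar L := \mf L(\mf g)/Z(\mf L(\mf g))$ inherited from $\mf L(\mf g)$. Since $Z(\mf L(\mf g))$ is homogeneous of degree $0$, any biderivation $F$ of $\bar L$ splits as $F = \sum_{r\in\Z} F_r$, where each $F_r$ is itself a skew-symmetric biderivation with $F_r(\bar L_m, \bar L_n) \subseteq \bar L_{m+n+r}$. Because every inner biderivation $F_\la = \la[\cdot,\cdot]$ is homogeneous of degree $0$, it will suffice to prove $F_r = 0$ for all $r\neq 0$ and $F_0 = \la[\cdot,\cdot]$ for some $\la \in \C$.

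The engine of the argument will be a pair of identities extracted from the biderivation relations by pairing with $d_0$. For $x \in \bar L_m$, $y \in \bar L_n$, the relations $F_r([d_0, x], y) = [d_0, F_r(x, y)] - [x, F_r(d_0, y)]$ and $F_r(x, [d_0, y]) = [d_0, F_r(x, y)] - [y, F_r(x, d_0)]$, combined with $[d_0, \cdot]$ acting as the grading operator, give
\begin{equation*}
[x, F_r(d_0, y)] = (n+r)\, F_r(x, y), \qquad [y, F_r(x, d_0)] = (m+r)\, F_r(x, y).
\end{equation*}
For $r \neq 0$, setting $x = d_0$ in the second identity together with skew-symmetry $F_r(d_0, d_0) = 0$ will yield $F_r(d_0, y) = 0$ for all $y$; the first identity then forces $F_r(x, y) = 0$ whenever $n + r \neq 0$, and by skew-symmetry whenever $m + r \neq 0$. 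So $F_r$ will be supported on $\bar L_{-r} \times \bar L_{-r}$. A further use of the biderivation identity with an auxiliary element from $\bar L_k$, $k \neq 0, -r$, will give $[\bar L_k, F_r(y,z)] = 0$ for $y, z \in \bar L_{-r}$; writing $F_r(y, z) = v \ot t^{-r} + c\, d_{-r}$ and choosing $k$ with $\bar L_k$ acting non-degenerately (for instance $k=1$, or $k=2$ when $r=-1$), the simplicity of $\mf g$ will force $v = 0$ and $c = 0$, hence $F_r = 0$.

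For $r = 0$, I would set $u_y := F_0(d_0, y) \in \bar L_n$ for $y \in \bar L_n$. The first identity yields $F_0(x, y) = \tfrac{1}{n}[x, u_y]$ for $n \neq 0$, and skew-symmetry gives the compatibility $m[x, u_y] = n[u_x, y]$ whenever $m, n \neq 0$. Specializing to $x = d_m$, $y = d_n$ will give $u_{d_n} = n\la\, d_n$ for a single scalar $\la \in \C$. Specializing to $x = e \ot t^m$, $y = f \ot t^n$ will produce linear maps $T_n : \mf g \to \mf g$ via the $\mf g$-part of $u_{f \ot t^n}$; the compatibility will force $T_n = n T_1$ together with $[e, T_1(f)] = [T_1(e), f]$ for all $e, f \in \mf g$, and simplicity of $\mf g$ (via the standard centroid-type argument) will make $T_1$ a scalar whose value, by matching against the Virasoro sector, coincides with $\la$. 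Thus $u_y = n \la y$ for all $y \in \bar L_n$ with $n \neq 0$, giving $F_0(x, y) = \la[x, y]$ whenever at least one argument has nonzero degree. The remaining case $x, y \in \bar L_0 = \mf g \op \C d_0$ will be handled by writing $x = -\tfrac{1}{n}[d_n, x \ot t^{-n}]$ (for $x \in \mf g$, $n \neq 0$) and $d_0 = -\tfrac{1}{2n}[d_n, d_{-n}]$, and then invoking the biderivation property one more time. The main obstacle will be precisely this global matching of scalars across the Virasoro sector, the loop sector, and the centroid of $\mf g$; the simplicity of $\mf g$ is the rigidity that forces them all to coincide into a single $\la$.
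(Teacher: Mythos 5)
Your proposal is correct, and every step I checked goes through: the graded decomposition $F=\sum_r F_r$ into skew-symmetric biderivations is valid, the two identities obtained by pairing with the grading operator $d_0$ do read $[x,F_r(d_0,y)]=(n+r)F_r(x,y)$ and $[y,F_r(x,d_0)]=(m+r)F_r(x,y)$, and these kill $F_r$ for $r\neq 0$ exactly as you describe (the auxiliary-element step works because for $w\in\bar L_k$ with $k\notin\{0,-r\}$ both $F_r([w,y],z)$ and $F_r(w,z)$ vanish for degree reasons, leaving $[w,F_r(y,z)]=0$, and then $[d_k,\cdot]$ forces $v=c=0$). In the $r=0$ case the cross-compatibility $m[x,u_y]=n[u_{x},y]$ with $x=d_m$ and $y=f\otimes t^n$ in fact gives $u_{f\otimes t^n}=n\lambda f\otimes t^n$ directly, so the centroid-type argument on $T_1$ you anticipate is even slightly more than is needed; the final reduction of $\bar L_0\times\bar L_0$ via $x=-\tfrac1n[d_n,x\otimes t^{-n}]$ and the Jacobi identity also works. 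However, your route is genuinely different from the paper's. The paper invokes the Bre\v{s}ar--Zhao theorem (their Corollary 2.4) to conclude at once that any skew-symmetric biderivation of the centerless algebra $L=\mf L(\mf g)/Z(\mf L(\mf g))$ has the form $F(x,y)=\gamma([x,y])$ for some $L$-module endomorphism $\gamma$ of $L$, and then the entire proof consists of computing that this centroid is $\C\,\mathrm{Id}$ (showing $\gamma(\overline{d_m})=a_0\overline{d_m}$, $\gamma(\overline{x\otimes t^m})=a_0\overline{x\otimes t^m}$, and $\gamma|_{\mf g}=a_0\,\mathrm{Id}$ using a Cartan element). What the paper's approach buys is brevity and a clean separation of the general structural fact from the algebra-specific computation; what your approach buys is self-containedness -- you never need the external result, at the cost of redoing, in graded form, essentially the reduction that Bre\v{s}ar--Zhao prove in general. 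Both arguments ultimately rest on the same rigidity: $d_0$ acting as the grading operator plus the simplicity of $\mf g$.
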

\begin{proof}
	Let $L=\mf L(\mf g)/Z(\mf L(\mf g))$, then $L=\bigoplus_{m\in\mathbb{Z}}L_m$ is $\mathbb{Z}$-graded. Let $F$ be a skew-symmetric biderivation of $L$. Then by\cite[Corollary 2.4]{BZ1}), there exists some $\ga \in \hom_L(L, L)$ such that $F(x,y)=\ga([x,y])$ for any $x,y \in L$.

Since $\ga$ is an $L$-module homomorphism, we get $\ga(L_m) \subseteq L_m$. Let $\ga(\overline{d_m})=\overline{X_m\ot t^m} +a_m\overline{d_m}$ for all $ m \in \Z$, $X_m \in \mf g, a_m \in \C$. Now we have
	$$0=\ga([\overline{y},\overline{d_m}])=[\overline{y},\overline{X_m\ot t^m}+a_m\overline{d_m}] =\overline{[y,X_m]\ot t^m}, \, \forall \, y \in \mf g.$$
	Hence $X_m=0$ for all $m \in \Z$. For any $m \neq 0$, we have $\ga(\overline{[d_{-m},d_m]})=[d_{-m},\ga(\overline{d_m})]$. This implies that $2m\ga(\overline{d_0})=2ma_m\overline{d_0}$, i.e., $2ma_0\overline{d_0}=2ma_m\overline{d_0}$. Hence $a_m=a_0$ for all $m \neq 0$. Moreover, for any $x\in\mf g$ and $m\neq 0$, since
$$-m\ga (\overline{x\otimes t^m})=\ga(\overline{[x\otimes t^m, d_0]})=[x\otimes t^m, \ga(\overline{d_0})]=-ma_0\overline{x\otimes t^m},$$
it follows that $\ga (\overline{x\otimes t^m})=a_0\overline{x\otimes t^m}$ for any $m\neq 0$. Since $\ga(L_0)\subseteq L_0$, we can assume
$\ga(\overline{x})=\overline{x^{\prime}}+c_x\overline{d_0}$ for $x, x^{\prime}\in\mf g, c_x\in\C$. Hence, for any $y\in\mf g$,
$$a_0\overline{[y,x]\otimes t}=\ga(\overline{[y,x]\otimes t})=\ga([y\otimes t, \overline{x}])=[y\otimes t, \ga (\overline{x})]=\overline{([y,x^{\prime}]-c_xy)\otimes t}.$$
Hence,
\begin{equation}\label{eq}
[y, x^{\prime}-a_0x]=c_xy, \,\forall\,y\in\mf g.
\end{equation}
In particular, if we take nonzero $y$ in a Cartan subalgebra of $\mf g$ in (\ref{eq}), we see that $c_x=0$. Then let $y$ be an arbitrary element of  $\mf g$ in (\ref{eq}), it yields that $x^{\prime}=a_0x$ for any $x\in\mf g$. Consequently,  $\ga = a_0 \, \text{Id}$. This completes the proof.
\end{proof}

The following result gives a connection between skew-derivations of  $\mf L(\mf g)$ and those of  $\mf L(\mf g)/Z(\mf L(\mf g))$.
\begin{proposition}\label{prop for L and L/Z}
The map $F \mapsto \bar F$ defines a bijection between skew-biderivations of $\mf L(\mf g)$ and skew-biderivations of $\mf L(\mf g)/Z(\mf L(\mf g))$.
\end{proposition}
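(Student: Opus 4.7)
The plan is to verify well-definedness of the map $F \mapsto \bar F$ using the perfectness of $\mf L(\mf g)$ together with Lemma \ref{biderivation with center}, then to prove injectivity by a direct algebraic argument and surjectivity by invoking the classification in Theorem \ref{skew-symmetric biderivation of L/Z}.

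First I would verify that $L := \mf L(\mf g)$ is perfect. Indeed, for $m \neq 0$ one has $x \ot t^m = \frac{1}{m}[d_0, x \ot t^m]$ and $d_m = \frac{1}{m}[d_0, d_m]$; further $d_0 = \frac{1}{2}[d_{-1}, d_1]$ and $\mf g \ot 1 = [\mf g, \mf g] \ot 1$ since $\mf g$ is simple; $K_1$ is recovered from any bracket $[x \ot t, y \ot t^{-1}] = [x,y] \ot 1 + (x,y) K_1$ with $(x,y) \neq 0$, and $K_2$ from $[d_2, d_{-2}] = -4 d_0 + \frac{1}{2} K_2$ after subtracting the multiple of $d_0$. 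Consequently, Lemma \ref{biderivation with center} yields $F(L, Z) = F(Z, L) = 0$ for every biderivation $F$ of $L$, and thus $\bar F(x+Z, y+Z) := F(x,y) + Z$ is a well-defined bilinear map on $L/Z$. A routine check shows that skew-symmetry and the two biderivation identities descend from $F$ to $\bar F$, so the assignment $F \mapsto \bar F$ indeed lands in the space of skew-biderivations of $L/Z$.

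For injectivity, suppose $F$ is a skew-biderivation of $L$ with $\bar F = 0$, so that $F(x,y) \in Z$ for all $x,y \in L$. The biderivation identity then gives $F([x,y], z) = [x, F(y,z)] - [y, F(x,z)] = 0$, because $F(y,z), F(x,z)$ are central, and the perfectness of $L$ forces $F \equiv 0$. For surjectivity, take any skew-biderivation $G$ of $L/Z$; by Theorem \ref{skew-symmetric biderivation of L/Z} there exists $\la \in \C$ with $G(\bar x, \bar y) = \la\, \overline{[x,y]}$, and the inner biderivation $F(x,y) := \la [x,y]$ of $L$ satisfies $\bar F = G$ by construction.

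I do not anticipate a serious obstacle: the vanishing on $Z$ is already packaged in Lemma \ref{biderivation with center}, and surjectivity rides on the classification in Theorem \ref{skew-symmetric biderivation of L/Z}. The only genuinely new input is the elementary verification that $\mf L(\mf g)$ is perfect.
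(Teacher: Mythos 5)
Your proposal is correct and follows essentially the same route as the paper: surjectivity is deduced from Theorem \ref{skew-symmetric biderivation of L/Z}, and injectivity from the perfectness of $\mf L(\mf g)$. The only difference is that the paper outsources injectivity to \cite[Lemma 2.5]{BZ1}, whereas you prove it directly (and also verify perfectness explicitly); your direct computations are accurate.
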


\begin{proof}
It follows from Theorem \ref{skew-symmetric biderivation of L/Z} that any skew-biderivation of $\mf L(\mf g)/Z(\mf L(\mf g))$ is of the form $\overline{F_{\lambda}}$ for some $\lambda\in\C$, i.e., the map is surjective. Moreover, since $\mf L(\mf g)$ is  perfect, it follows from  \cite[Lemma 2.5] {BZ1} that the map is injective, so that it is bijective, as desired.
\end{proof}


As a direct consequence of Theorem \ref{skew-symmetric biderivation of L/Z} and Proposition \ref{prop for L and L/Z}, we have

\begin{corollary}\label{coro for skew}
	Every skew-symmetric biderivation of $\mf L(\mf g)$ is inner.
\end{corollary}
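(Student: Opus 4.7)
The plan is to deduce this corollary directly by chaining together the two preceding results. Let $F$ be a skew-symmetric biderivation of $\mf L(\mf g)$. First I would pass to the quotient: the induced bilinear map $\bar F$ on $\mf L(\mf g)/Z(\mf L(\mf g))$ is well-defined (using Lemma \ref{biderivation with center}, since $\mf L(\mf g)$ is perfect and $F$ kills the center on either side) and is itself a skew-symmetric biderivation, as recorded in Proposition \ref{prop for L and L/Z}.

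Next I would apply Theorem \ref{skew-symmetric biderivation of L/Z} to conclude that $\bar F$ is inner, so $\bar F = \overline{F_\lambda}$ for some $\lambda \in \C$, where $F_\lambda(x,y) = \lambda [x,y]$. Observe that $F_\lambda$ is itself a skew-symmetric biderivation of $\mf L(\mf g)$, and its image under the reduction map $F \mapsto \bar F$ is precisely $\overline{F_\lambda}$. Thus both $F$ and $F_\lambda$ are skew-symmetric biderivations of $\mf L(\mf g)$ with the same image in the skew-symmetric biderivations of $\mf L(\mf g)/Z(\mf L(\mf g))$.

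Finally I would invoke the injectivity half of Proposition \ref{prop for L and L/Z} to conclude that $F = F_\lambda$, which is by definition an inner biderivation. There is no substantive obstacle: all of the real content (the classification on the quotient and the bijectivity of the reduction map, which rests on the perfectness of $\mf L(\mf g)$ via \cite[Lemma 2.5]{BZ1}) has already been established, so the corollary is a one-line synthesis of Theorem \ref{skew-symmetric biderivation of L/Z} and Proposition \ref{prop for L and L/Z}.
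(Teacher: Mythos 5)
Your argument is correct and is exactly the synthesis the paper intends: the corollary is stated there as a direct consequence of Theorem \ref{skew-symmetric biderivation of L/Z} and Proposition \ref{prop for L and L/Z}, and your chain (reduce modulo the center, classify $\bar F$ as $\overline{F_\lambda}$, then use injectivity of $F\mapsto\bar F$ to get $F=F_\lambda$) spells out precisely that deduction. No gaps.
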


Before going to evaluate symmetric biderivations of $\mf L(\mf g)$, we recall some known results from \cite{LLZ}.
\begin{proposition}(cf. \cite[Proposition 3.1]{LLZ})\label{Prop for zero bider}
	Let $L$ be a finite-dimensional semisimple complex Lie algebra, and $V$, $W$ be $L$-modules with $V$ being finite-dimensional. Assume that a bilinear map $\delta: L \times V \to W$ satisfies the following conditions:
	\begin{itemize}
		\item[(1)] For each $x \in L$, $\delta(x, \cdot): V \to W$, $v \mapsto \delta(x, v)$ is a $L$-module homomorphism.
		\item[(2)] For each $v \in V$, $\delta(\cdot, v): L \to W$, $x \mapsto \delta(x, v)$ is a derivation from $L$ to $W$.
	\end{itemize}
	Then $\delta = 0$.
\end{proposition}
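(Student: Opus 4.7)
The plan is to reduce to irreducible components of $V$ via Weyl's complete reducibility theorem and treat the trivial and non-trivial cases separately. Decompose $V = \bigoplus_i V_i$ into irreducible $L$-submodules; by bilinearity it suffices to prove $\delta|_{L \times V_i} = 0$ for each $i$. If $V_i$ is trivial, then for every $v \in V_i$ condition (1) gives $y \cdot \delta(x, v) = \delta(x, y \cdot v) = 0$, so $\delta(L, V_i) \subseteq W^L$. Since $L$ acts trivially on $W^L$, condition (2) reduces to $\delta([x,y], v) = 0$, and semisimplicity of $L$ (whence $L = [L,L]$) forces $\delta|_{L \times V_i} = 0$.

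For a non-trivial irreducible $V_i$, set $M_i := \mathrm{span}\{\delta(x, v) : x \in L,\, v \in V_i\} \subseteq W$; this is finite-dimensional because it is the image of the bilinear map $\delta|_{L \times V_i}$ on a finite-dimensional domain. Condition (1) makes $M_i$ an $L$-submodule of $W$, and the surjection $L \otimes V_i \twoheadrightarrow M_i$, $x \otimes v \mapsto \delta(x, v)$, is $L$-equivariant when $L \otimes V_i$ is given the $L$-action through the second tensor factor alone. Since $V_i$ is irreducible, $L \otimes V_i \cong V_i^{\oplus \dim L}$, so $M_i$ is a direct sum of copies of $V_i$; thus we may write $M_i \cong V_i \otimes N$ for a finite-dimensional trivial $L$-module $N$ (the multiplicity space). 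By Schur's lemma, for each $x \in L$ the $L$-linear map $\delta(x, \cdot) : V_i \to V_i \otimes N$ has the form $v \mapsto v \otimes n_x$ for a unique $n_x \in N$, with $x \mapsto n_x$ linear.

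Substituting into condition (2) and writing $\rho_i : L \to \mathrm{End}(V_i)$ for the representation yields the identity
$$ \mathrm{id}_{V_i} \otimes n_{[x, y]} = \rho_i(x) \otimes n_y - \rho_i(y) \otimes n_x \qquad \text{in } \mathrm{End}(V_i) \otimes N. $$
Applying the trace in the first factor and using $\mathrm{Tr}\,\rho_i(z) = 0$ for all $z \in L$ (since traces of commutators vanish and $L = [L, L]$), one obtains $(\dim V_i)\, n_{[x, y]} = 0$, so $n_{[x, y]} = 0$ for all $x, y$. Semisimplicity of $L$ once again yields $n \equiv 0$, hence $\delta|_{L \times V_i} = 0$. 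The main obstacle I anticipate is the structural step in the non-trivial case: recognizing that $M_i$ is $V_i$-isotypic (which requires both its finite-dimensionality and the $L$-equivariance supplied by condition (1)) so that Schur's lemma pins down $\delta(x, \cdot)$ up to the element $n_x \in N$; once this is set up, condition (2) collapses to a single tensor identity that the trace immediately annihilates.
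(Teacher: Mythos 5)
Your argument is correct. Note first that the paper does not actually prove this proposition: it is quoted from \cite[Proposition 3.1]{LLZ} and used as a black box, so there is no internal proof to compare against; you have supplied a genuine, self-contained argument. The reduction to irreducible summands of $V$ via Weyl's theorem is sound, and the two key structural points both check out: by condition (1) the span $M_i=\mathrm{span}\,\delta(L,V_i)$ is a finite-dimensional $L$-submodule of $W$ and an $L$-equivariant quotient of $L\otimes V_i$ (action on the second factor only), hence $V_i$-isotypic, so Schur's lemma legitimately writes $\delta(x,\cdot)|_{V_i}=(\,\cdot\,)\otimes n_x$ with $x\mapsto n_x$ linear. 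The identity extracted from condition (2), $\mathrm{id}_{V_i}\otimes n_{[x,y]}=\rho_i(x)\otimes n_y-\rho_i(y)\otimes n_x$, together with $\mathrm{Tr}\,\rho_i(z)=0$ (from $L=[L,L]$) and perfectness of $L$, correctly forces $n\equiv 0$. Two minor remarks: the case split is unnecessary, since the Schur-plus-trace argument already covers trivial summands with $\rho_i=0$ (though your separate, simpler treatment is harmless); and the standard proof of this statement (as in \cite{LLZ}) instead replaces $W$ by the finite-dimensional submodule $\delta(L,V)$ and invokes Whitehead's first lemma to make each derivation $\delta(\cdot,v)$ inner, whereas your route avoids Lie algebra cohomology entirely and is, if anything, more elementary.
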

\begin{theorem}(cf. \cite[Theorem 3.2]{LLZ})\label{sym bider of semi simple}
	Let $L$ be a finite-dimensional semisimple complex Lie algebra, and $V$ be a finite-dimensional $L$-module. Then all symmetric biderivations $\delta: L \times L \to V$ are trivial.
\end{theorem}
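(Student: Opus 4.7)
The plan is to use Whitehead's cohomological vanishing theorems to reduce the statement to a constraint on a single linear map $v: L \to V$, and then exploit the symmetry of $\delta$ to force $v = 0$. First I would pass to the case $V^L = 0$: Weyl's complete reducibility gives $V = V^L \oplus V'$ with $V' = L \cdot V$, and the $V^L$-valued part $\delta_0$ of $\delta$ satisfies $\delta_0([x, y], z) = x \cdot \delta_0(y, z) - y \cdot \delta_0(x, z) = 0$ since $L$ acts trivially on $V^L$, which with $L = [L, L]$ forces $\delta_0 \equiv 0$. Under $V^L = 0$, Whitehead's first lemma $H^1(L, V) = 0$ combined with injectivity of the inner-derivation map $V \to \mathrm{Der}(L, V)$, $w \mapsto (x \mapsto x \cdot w)$, produces for each $y$ a unique $v(y) \in V$ with $\delta(x, y) = x \cdot v(y)$, and $v$ is linear. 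The biderivation identity in the first slot is automatic for this presentation, so the whole hypothesis reduces to
\[
x \cdot v(y) = y \cdot v(x), \qquad \forall\, x, y \in L.
\]

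The crux is to show this symmetry forces $v = 0$, and the cleanest reduction is to prove that $v$ is $L$-equivariant: for if $v([a, b]) = a \cdot v(b)$, then the symmetry gives $a \cdot v(b) = b \cdot v(a)$, while applying equivariance to $[b, a]$ gives $a \cdot v(b) = -b \cdot v(a)$; combining, $b \cdot v(a) = 0$ for all $b$, hence $v(a) \in V^L = 0$ and $v \equiv 0$. To establish equivariance, I would apply the $L$-action to the symmetry identity and expand, producing the cyclic relation
\[
[a, b] \cdot v(c) + [b, c] \cdot v(a) + [c, a] \cdot v(b) = 0.
\]
Combined with Jacobi, this shows that the skew bilinear map $w : \Lambda^2 L \to V$, $w(a, b) := v([a, b])$, is a Chevalley--Eilenberg $2$-cocycle. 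Whitehead's second lemma $H^2(L, V) = 0$ then writes $w = du$ for some $u : L \to V$, and setting $\widetilde v := v + u$ one verifies using the symmetry that $\widetilde v$ is a derivation, hence inner by Whitehead's first lemma; this yields $v(x) = x \cdot w_0 - u(x)$ for some $w_0 \in V$.

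The main obstacle is closing the final loop without circularity: naive substitution of $v = (\cdot) \cdot w_0 - u$ back into the symmetry identity tends to reproduce the original condition on $u$ rather than pin down $u$. My fallback, should this cohomological closure fail, is a direct representation-theoretic argument building on Proposition 3.1 and the structure theory of $L$: after splitting $V$ into $L$-isotypic components to reduce to $V$ irreducible and $L$ into simple ideals to reduce to $L$ simple, the symmetry condition can be analyzed weight-by-weight over a Cartan subalgebra of $L$; since $V^L = 0$ guarantees that the Casimir of $L$ acts on each non-trivial irreducible summand of $V$ as a nonzero scalar, the resulting system of constraints is rigid enough to force $v = 0$ and hence $\delta = 0$.
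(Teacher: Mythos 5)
The paper does not actually prove this statement---it is quoted from \cite[Theorem 3.2]{LLZ}---so there is no in-paper argument to match your proposal against; I can only assess the proposal on its own terms, and it is incomplete. Your reductions are correct and cleanly executed: splitting off $V^L$ kills the invariant part of $\delta$; under $V^L=0$, Whitehead's first lemma plus injectivity of $w\mapsto(x\mapsto x\cdot w)$ gives a unique linear $v:L\to V$ with $\delta(x,y)=x\cdot v(y)$; and the whole hypothesis does reduce to the symmetry $x\cdot v(y)=y\cdot v(x)$. The cyclic relation and the fact that $w(a,b):=v([a,b])$ is a $2$-cocycle are also correct. The problem is that the cohomological closure is not merely difficult to arrange---it is provably vacuous. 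The cyclic relation is a formal consequence of the symmetry and the module axiom alone (apply $c$ to $a\cdot v(b)=b\cdot v(a)$ and use symmetry twice), and the statement ``$w=du$ with $v+u$ a derivation, hence inner'' is \emph{equivalent} to the symmetry hypothesis: since $d$ annihilates inner derivations and $u\equiv -v \pmod{B^1(L,V)}$, substituting back yields $d(-v)=v\circ[\cdot,\cdot]$, which is literally $a\cdot v(b)=b\cdot v(a)$ again. So Whitehead's second lemma extracts nothing; the loop cannot be closed from within this framework.

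That means the entire content of the theorem lives in the step you relegate to a fallback---showing that $x\cdot v(y)=y\cdot v(x)$ with $V^L=0$ forces $v=0$---and that fallback is only a hope (``the resulting system of constraints is rigid enough''), not an argument. A genuine proof at this point requires real input beyond $H^1=H^2=0$: e.g., reduce to $V$ irreducible nontrivial, fix a Cartan subalgebra $\mathfrak h$ and root vectors $x_\alpha$, and use the specialization of the cyclic relation $\alpha(b)\,x_\alpha\cdot v(a)=\alpha(a)\,x_\alpha\cdot v(b)$ for $a,b\in\mathfrak h$ together with the symmetry $x_\alpha\cdot v(a)=a\cdot v(x_\alpha)$ to pin down the weight components of $v(\mathfrak h)$ and $v(x_\alpha)$, or run the analogous Casimir identity $c\,v(y)=y\cdot\bigl(\sum_k e_k\cdot v(f_k)\bigr)+\sum_k[e_k,y]\cdot v(f_k)$ to completion. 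None of this is carried out, so the proposal as written does not establish the theorem.
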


We are now in a position to present the following result on symmetric biderivations of $\mf L(\mf g)$.

\begin{theorem}\label{thm for symmetric}
	Every symmetric biderivation of $\mf L(\mf g)$ is trivial.
\end{theorem}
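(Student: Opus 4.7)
The plan is to combine the $\Z$-grading on $\mf L(\mf g)$ with the results imported from \cite{LLZ} to reduce $F$ to a finite-dimensional problem on $\mf g$, then use Theorem~\ref{Th derivation} (every derivation is inner) to collapse the remaining data to a single scalar, and finally use the symmetry hypothesis to kill that scalar.

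Since the bracket of $\mf L(\mf g)$ preserves the grading, any symmetric biderivation $F$ decomposes as $F=\sum_n F_n$, where $F_n(\mf L(\mf g)_a,\mf L(\mf g)_b)\subseteq \mf L(\mf g)_{a+b+n}$ and each $F_n$ is itself a symmetric biderivation; hence I may assume that $F$ is homogeneous of some fixed degree $n$. The restriction $F|_{\mf g\times\mf g}$ is then a symmetric biderivation of the semisimple Lie algebra $\mf g$ into the finite-dimensional $\mf g$-module $\mf L(\mf g)_n$, so Theorem~\ref{sym bider of semi simple} yields $F|_{\mf g\times\mf g}=0$. Given this, for each $k\in\Z$ I apply Proposition~\ref{Prop for zero bider} to $F\colon\mf g\times \mf L(\mf g)_k\to \mf L(\mf g)_{k+n}$: condition (2) of that proposition is the biderivation identity in the first slot, and condition (1) follows from $F(x,[y,v])=y\cdot F(x,v)-v\cdot F(x,y)$ once the last term is killed by $F|_{\mf g\times\mf g}=0$. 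This forces $F(\mf g,\mf L(\mf g))=0$, and by symmetry $F(\mf L(\mf g),\mf g)=0$.

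Next, for each fixed $z\in\mf L(\mf g)$ the map $F(\cdot,z)$ is a derivation of $\mf L(\mf g)$, so Theorem~\ref{Th derivation} yields $F(\cdot,z)=\ad(w_z)$ for some $w_z$ unique modulo $Z(\mf L(\mf g))=\C K_1\oplus\C K_2$. The vanishing of $F$ on $\mf g$ forces $w_z$ to centralize $\mf g$, and a direct calculation using the simplicity of $\mf g$ (so that $Z(\mf g)=0$) identifies $C_{\mf L(\mf g)}(\mf g)=\C K_1\oplus\C K_2\oplus\bigoplus_{m\in\Z}\C d_m$. Matching with the degree-$n$ homogeneity of $F$, I may choose, for each homogeneous $z\in\mf L(\mf g)_b$, a representative $w_z=\alpha(z)\,d_{b+n}$ modulo $Z$, where $\alpha(z)\in\C$. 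The problem is thereby reduced to showing that the scalar function $\alpha$ on $\mf L(\mf g)$ is identically zero.

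Finally, the symmetry relation $F(x,z)=F(z,x)$ for homogeneous $x\in\mf L(\mf g)_a$, $z\in\mf L(\mf g)_b$ becomes $\alpha(z)[x,d_{b+n}]=\alpha(x)[z,d_{a+n}]$. Testing with $x=u\ot t^a$, $z=v\ot t^b$ for $u,v\in\mf g$ linearly independent and $a,b\neq 0$ collapses this to $a\,\alpha(v\ot t^b)\,u=b\,\alpha(u\ot t^a)\,v$ in $\mf g$; linear independence of $u,v$ then forces $\alpha(u\ot t^a)=\alpha(v\ot t^b)=0$, and since $u,v$ and $a,b\neq 0$ were arbitrary, $\alpha$ vanishes on all of $\mf g\ot t^a$ for every $a\neq 0$. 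A similar test with $x=d_a$ against $z=u\ot t^b$ then forces $\alpha(d_b)=0$ for every $b$, Lemma~\ref{biderivation with center} handles $\alpha(K_1)=\alpha(K_2)=0$, and the first-step vanishing handles $\alpha|_{\mf g}=0$. Hence $w_z\in Z$ for every $z$ and so $F=0$. I expect the main obstacle to be the careful bookkeeping in this last step, especially around the degenerate cases $a=0$, $b=0$, or $a+b+n=0$ where the Virasoro cocycle $\frac{m^3-m}{12}K_2$ contributes; but once Theorem~\ref{Th derivation} has collapsed the space of possibilities to a single scalar function $\alpha$, the remaining argument is quite rigid.
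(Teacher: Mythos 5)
Your argument is correct, and its second half takes a genuinely different route from the paper. The first half coincides in substance with the paper's Claims 1--3: both kill $F(\mf g,\mf L(\mf g))$ via Theorem~\ref{sym bider of semi simple} and Proposition~\ref{Prop for zero bider}; your device of first splitting $F$ into graded components so that the targets $\mf L(\mf g)_{k+n}$ are automatically finite-dimensional replaces the paper's trick of passing to the finite-dimensional module $U(\mf g)F(\mf g,\cdot)$, and is a clean alternative (you should record the routine verification that each graded component of a symmetric biderivation is again one, since that reduction carries the rest of your proof). From there the paper proceeds by propagating the vanishing from $\mf g$ to $\what{\mf g}$ using the automorphisms $\exp(\la\,{\rm ad}(x_\al\ot t^m))$ together with a Vandermonde argument (citing \cite[Proposition 2.7(3)]{CYZ}), and then descends to ${\rm Vir}$ and invokes \cite[Theorem 2.8]{TY} on symmetric biderivations of the Witt/Virasoro algebra. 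You instead exploit Theorem~\ref{Th derivation}: each $F(\cdot,z)$ is an inner derivation ${\rm ad}\,w_z$, the vanishing on $\mf g$ forces $w_z$ into $C_{\mf L(\mf g)}(\mf g)=\C K_1\op\C K_2\op\bigoplus_m\C d_m$, homogeneity pins it to $\al(z)d_{b+n}$ modulo the center, and the symmetry of $F$ kills the scalar $\al$. This buys you a proof that is more self-contained (no automorphism/Vandermonde step, no appeal to the Virasoro result of \cite{TY}) and that actually uses the derivation theorem the paper proves in Section 2 but never applies to biderivations; the computations you defer (the centralizer, the non-appearance of the cocycles in $[u\ot t^a,d_m]$, the tests with linearly independent $u,v$) all check out, with only a harmless index slip ($\al(d_b)$ should read $\al(d_a)$ in the penultimate test).
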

\begin{proof}
Let $F$ be a symmetric biderivation of $\mf L(\mf g)$. We show that $F$ is trivial by several claims.

{\bf Claim 1:} $F(\mf g,\mf g)=0$.

 Note that $F(\mf g , \mf g)$ is a finite-dimensional vector space, and contained in $U(\mf g)(F(\mf g, \mf g))$ which is a finite dimensional $\mf g$ module, since the adjoint action of $\mf g$ on $\mf g$ is integrable and $\mf g$ acts on $Vir$ trivially. Thus $F|_{\mf g\times \mf g}: \mf g \times \mf g \to U(\mf g)(F(\mf g, \mf g))$ is a symmetric biderivation of $\mf g$. Hence Claim 1 follows from Theorem \ref{sym bider of semi simple}.
	
{\bf Claim 2:} $F(\mf g, \what{\mf g} )=0$.

Fix some $0 \neq m \in \Z$ and consider $F|_{\mf g , \,\mf g\ot t^m} :\mf g \times (\mf g\ot t^m) \to F(\mf g , \mf g\ot t^m)$. Note that $F(\mf g , \mf g\ot t^m) \subseteq U(\mf g)F(\mf g, \mf g\ot t^m)$, which is a finite-dimensional $\mf g$-module. Hence, to apply Proposition  \ref{Prop for zero bider}, we only need to show that $F(x,.):\mf g \ot t^m \to U(\mf g)F(\mf g , \mf g\ot t^m)$ is a $\mf g$-module homomorphism for any $x\in\mf g$, since $F$ is a biderivation. For all $y,g \in \mf g,$ we have
	\begin{align*}
		F(x,y.g \ot t^m) &=F(x,[y,g\ot t^m]) \cr
		&=[F(x,y),g\ot t^m]+[y,F(x,g\ot t^m)] \cr
		&=y.F(x,g\ot t^m).
		\end{align*}
Therefore we have $F(\mf g , \mf g\ot t^m)=0$. Since $m$ is arbitrary, this proves Claim 2.

{\bf Claim 3:} $F(\mf g, \mf L(\mf g))=0$.

Let ${\mf {sl}}_2^m={\rm {span}}_{\C}\{d_m,d_0,d_{-m}\}$ for all $m \geq 1$. Note that $\mf g$ acts trivially on $\mf{sl}_2^m$. Now consider $F|_{\mf g \times {\mf{sl}}_2^m}:\mf g \times {\mf{sl}}_2^m \to F(\mf g , {\mf{sl}}_2^m).$ In a similar manner as Claim 2, to apply Proposition \ref{Prop for zero bider}, we only need to show that $F(x, \cdot):\mf{sl}_2^m \to U(\mf g)F(\mf g , {\mf sl}_2^m)$ is a $\mf g$-module homomorphism for any $x\in\mf g$. Now for all $y \in \mf g, v \in \mf{sl}_2^m$, we have
\begin{align*}
	F(x,y.v)&=F(x,[y,v])=[F(x,y),v]+[y,F(x,v)]=y.F(x,v),
\end{align*}
where the last equality holds by Claim 2. It follows that $F(\mf g, d_m)=0$ for all $m \in \Z$ by Proposition \ref{Prop for zero bider}. This  together with Claim 2 and Lemma \ref{biderivation with center} proves  Claim 3.

{\bf Claim 4:}	$F(\what {\mf g}, \mf L(\mf g))=0$.

By \cite[Proposition 2.7(3)]{CYZ}, we have $F(\sigma(\mf g ), \mf L(\mf g))=0$ for all $\sigma \in \rm {Aut}(\mf L(\mf g))$. Let $\{x_\al \mid \al \in \Phi\}$ be the set of all root vectors of $\mf g$. Let $k=\min\{l\mid ({\rm{ad}}x_\al)^{l+1}=0, \,\forall\,\al \in \Phi\}$. Clearly $k+1 \geq 2$. Now consider  $\exp(\la\, {\rm {ad}}(x_\al \ot t^m)) \in \rm{Aut(\mf L(\mf g))}$ for $\al \in \Phi$, $m \neq 0$ and nonzero $\la \in \C$. Then we have
$$\exp(\la\, {\rm {ad}}(x_\al \ot t^m)) (\mf g)=\dis{\sum_{i=0}^{k}} \frac{\la^{i}}{i!}\underbrace{[x_\al,[x_\al,\cdots, [x_\al}_{i\,\rm times},\mf g ]\cdots]]\ot t^{im}.  $$

Now by choosing various $\la$, we can form a Vandermonde matrix and conclude that $[x_{\alpha},\mf g] \ot t^m \in \sum_{\sigma\in\rm {Aut}(\mf L(\mf g))}\sigma(\mf g)$. Since $\text{\rm span}_{\C}\{[x_\al,\mf g]\mid \al \in \Phi\}=\mf g$, we get $F(\mf g \ot t^m, \mf L(\mf g))=0$. As $m$ is arbitrary, the Claim follows.

{\bf Claim 5:} $F(\mf L(\mf g), \mf L(\mf g) )=0$.

Since $F(\wtil {\mf g}, \mf L(\mf g))=0$ by Claim 4 and Lemma \ref{biderivation with center}, we have  the following induced map
$$\wtil F: ({\mf L(\mf g)}/{\wtil{\mf g}}) \times  ({\mf L(\mf g)}/{\wtil{\mf g}}) \to  {\mf L(\mf g)}/{\wtil{\mf g}}$$ which is a symmetric biderivation of Vir. Now by \cite[Theorem 2.8]{TY}, we have $\wtil F=0$, i.e
\begin{equation}\label{an assertion}
F(\mf L(\mf g), \mf L(\mf g)) \subseteq \wtil{\mf g}.
\end{equation}
Hence, for any $m\in\mathbb{Z}, x,y\in\mf L(\mf g), z\in\mf g$,
$$0=F(x, [y, z\otimes t^m])=[F(x,y),z\otimes t^m]+[y, F(x, z\otimes t^m)]=[F(x,y),z\otimes t^m].$$
This together with (\ref{an assertion}) yields that $F(x, y)\in\C K_1$ for any $x, y\in\mf L(\mf g)$. Therefore,
$$F(x, [y,z])=[F(x, y), z]+[y, F(x, z)]=0,\,\forall\, x,y,z\in\mf L(\mf g).$$
Hence, the claim follows, as $\mf L(\mf g)$ is perfect. We complete the proof.
\end{proof}

As a direct consequence of Corollary \ref{coro for skew} and Theorem \ref{thm for symmetric}, we have
\begin{corollary}\label{coro for biderivation}
	Every  biderivation of $\mf L(\mf g)$ is inner.
\end{corollary}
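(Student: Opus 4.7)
The plan is to reduce the statement to the two principal results already established in the section, namely Corollary \ref{coro for skew} (every skew-symmetric biderivation of $\mf L(\mf g)$ is inner) and Theorem \ref{thm for symmetric} (every symmetric biderivation of $\mf L(\mf g)$ is trivial).

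First, I would recall the elementary observation, noted earlier in the paper, that any biderivation $F$ of a Lie algebra $L$ admits a canonical decomposition
\[
F = F_s + F_a, \qquad F_s(x,y) = \tfrac{1}{2}\bigl(F(x,y)+F(y,x)\bigr), \quad F_a(x,y) = \tfrac{1}{2}\bigl(F(x,y)-F(y,x)\bigr),
\]
where $F_s$ is a symmetric biderivation and $F_a$ is a skew-symmetric biderivation of $L$. A direct check confirms both pieces satisfy the biderivation identities, since the defining relations of a biderivation are linear in $F$ and invariant under the swap $(x,y)\mapsto (y,x)$.

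Applying this decomposition to a biderivation $F$ of $\mf L(\mf g)$, Theorem \ref{thm for symmetric} yields $F_s = 0$, so $F = F_a$. Then Corollary \ref{coro for skew} gives that $F_a = F_\lambda$ for some $\lambda \in \C$, i.e., $F(x,y) = \lambda[x,y]$ for all $x,y \in \mf L(\mf g)$. This is by definition an inner biderivation, completing the argument.

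There is no real obstacle here, since all substantive work has been carried out in the preceding theorem and corollary; the only task is to spell out the symmetric/skew-symmetric splitting and cite the two results. I would keep the proof to a couple of lines.
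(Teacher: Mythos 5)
Your proposal is correct and matches the paper's approach exactly: the paper derives the corollary as a direct consequence of Corollary \ref{coro for skew} and Theorem \ref{thm for symmetric} via the standard decomposition of a biderivation into its symmetric and skew-symmetric parts, which is precisely what you spell out.
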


	\section{An application}
An application of biderivations is the characterization of post-Lie algebra structures, which have been introduced by Valette in connection with the homology of partition posets and the study of Koszul operads (cf. \cite{VB}).
	\begin{definition}
		A commutative post-Lie algebra structure on a Lie algebra $L$ over a field $\mathbb{F}$ is an $\mathbb F$-bilinear product $x.y$ on $L$ which satisfy the following identities:
		$$x.y=y.x    $$
		$$  [x,y].z=x.(y.z)-y.(x.z)$$
		$$ x.[y,z]=[x.y,z]+[y,x.z], \, \forall \, x,y,z \in L. $$
In this sense, we call $(L,[,],.)$ as a commutative post-Lie algebra.
	\end{definition}

It is trivial to observe that every Lie algebra $L$ posses a trivial commutative post-Lie algebra structure with the operation $x.y=0$ for all $x,y \in L.$ Moreover, it is a challenging question to determine when a Lie algebra admits a non-trivial commutative post-Lie algebra structure and find all non-trivial commutative post-Lie algebra structures. There is a connection between commutative post-Lie algebra structures over a Lie algebra $L$ and symmetric biderivations on $L$, given by the following.
	\begin{lemma}[cf. \cite{XT1}]\label{connection}
		Let $(L,[,],.)$ be a commutative post-Lie algebra over $L.$ Then the bilinear map $\de: L \times L \to L$ defined by $\de(x,y)=x.y$ is a symmetric biderivation on $L$.
	\end{lemma}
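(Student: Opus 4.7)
The plan is to verify the two biderivation identities directly from the three defining axioms of a commutative post-Lie algebra, plus the antisymmetry of the Lie bracket on $L$. Symmetry of $\delta$ is immediate since $\delta(x,y)=x.y=y.x=\delta(y,x)$ by the first axiom.

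Next I would check the biderivation identity in the second slot, namely
$$\delta(x,[y,z])=[y,\delta(x,z)]-[z,\delta(x,y)].$$
The third post-Lie axiom directly gives $x.[y,z]=[x.y,z]+[y,x.z]$, and by antisymmetry of the bracket $[x.y,z]=-[z,x.y]$. Substituting back yields exactly the required identity, with no use of commutativity beyond recognizing $\delta(x,y)=x.y$.

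The identity in the first slot, namely
$$\delta([x,y],z)=[x,\delta(y,z)]-[y,\delta(x,z)],$$
requires a little more work because the second post-Lie axiom $[x,y].z=x.(y.z)-y.(x.z)$ expresses $[x,y].z$ in terms of the post-Lie product rather than brackets. Here is where I would exploit the commutativity axiom: rewrite $[x,y].z = z.[x,y]$, then apply the third axiom with the roles of variables permuted to obtain $z.[x,y]=[z.x,y]+[x,z.y]$, and finally use commutativity twice more ($z.x=x.z$, $z.y=y.z$) together with antisymmetry of $[\,,\,]$ to rewrite this as $[x,y.z]-[y,x.z]$. This gives the desired identity.

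The only mildly tricky point is realizing that the second post-Lie axiom is not directly the first biderivation identity and that the passage between them requires invoking commutativity to swap the slots in $[x,y].z$ and then applying the third axiom; the rest is bookkeeping with bracket antisymmetry. No deep obstacle arises, and the proof is a short formal computation.
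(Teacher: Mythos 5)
Your verification is correct. The paper itself gives no proof of this lemma, merely citing \cite{XT1}, so there is no in-paper argument to compare against; your direct computation is exactly the kind of check the authors left to the reader. Both steps are sound: the second biderivation identity $\de(x,[y,z])=[y,\de(x,z)]-[z,\de(x,y)]$ follows from the derivation axiom $x.[y,z]=[x.y,z]+[y,x.z]$ plus antisymmetry of the bracket, and the first identity $\de([x,y],z)=[x,\de(y,z)]-[y,\de(x,z)]$ follows by first commuting $[x,y].z=z.[x,y]$, expanding with the same axiom, and commuting back. One small observation worth recording: your argument never invokes the axiom $[x,y].z=x.(y.z)-y.(x.z)$ at all, so the conclusion holds for any commutative product satisfying only the first and third post-Lie axioms; this is consistent with the lemma but slightly stronger than its hypotheses require.
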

	Owing to Lemma \ref{connection} and Corollary \ref{coro for biderivation}, we have the following result on commutative post-Lie algebra structure on $\mf L(\mf g)$.
	\begin{theorem}\label{Thm comm post str}
		Every commutative post-Lie algebra structure on $\mf L(\mf g)$ is trivial.	
	\end{theorem}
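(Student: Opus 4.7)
The proof should be essentially immediate from the machinery already assembled. The plan is to take an arbitrary commutative post-Lie algebra structure $(\mf L(\mf g), [\,,\,], \cdot)$ and reduce triviality of the product $\cdot$ to a statement about symmetric biderivations, which has already been settled in Section 3.

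First I would invoke Lemma \ref{connection}: the bilinear map $\delta : \mf L(\mf g)\times\mf L(\mf g)\to \mf L(\mf g)$ given by $\delta(x,y)=x\cdot y$ is a symmetric biderivation of $\mf L(\mf g)$. The symmetry is the axiom $x\cdot y=y\cdot x$, and the biderivation identities follow directly from the two compatibility axioms in the definition of a commutative post-Lie algebra together with skew-symmetry of $[\,,\,]$. (Lemma \ref{connection} packages this verification, so I would simply cite it.)

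Next I would apply Theorem \ref{thm for symmetric}, which asserts that every symmetric biderivation of $\mf L(\mf g)$ is trivial. This immediately forces $\delta\equiv 0$, i.e.\ $x\cdot y=0$ for all $x,y\in\mf L(\mf g)$. Hence the given commutative post-Lie structure coincides with the trivial one, which is precisely the content of Theorem \ref{Thm comm post str}.

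There is essentially no obstacle in this final step since the hard work was already done in proving Theorem \ref{thm for symmetric}, where the interplay between the affine ideal $\wtil{\mf g}$, the Virasoro quotient, and the integrability of the $\mf g$-action was exploited. The only thing one should be mildly careful about is confirming that Lemma \ref{connection}, as quoted from \cite{XT1}, applies verbatim in our complex, $\mathbb{Z}$-graded infinite-dimensional setting; but its proof is purely formal and uses only the three defining axioms, so no adjustment is needed. Thus the proof reduces to a two-line application of Lemma \ref{connection} followed by Theorem \ref{thm for symmetric}.
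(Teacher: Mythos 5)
Your proposal is correct and follows essentially the same route as the paper, which deduces the theorem from Lemma \ref{connection} together with the biderivation results of Section 3; if anything, your direct appeal to Theorem \ref{thm for symmetric} is slightly cleaner than the paper's citation of Corollary \ref{coro for biderivation}, since it avoids the (easy) extra step of noting that a symmetric inner biderivation $\la[\cdot,\cdot]$ must have $\la=0$.
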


	\vspace{1cm}
	{\bf Acknowledgments:}
	The first and third authors would like to thank Shanghai Maritime University, Shanghai, China for the hospitality during their visit period, where most of the work of this project was done. Further the first author would like to thank prof. Hengyun Yang for her generous guidance in his visit period.  Y.Y. is partially supported by the National Natural Science Foundation of China (12271345 and 12071136).
K.Z. is partially supported by  NSERC (311907-2020).


\begin{thebibliography}{100}
		\bibitem[B]{B} D. Benkovic, Biderivations of triangular algebras, Linear Algebra Appl. 431, 1587-1602 (2009).
\bibitem[MB]{MB} M. Bresar, On generalized biderivations and related maps, J. Algebra 172, 764-786 (1995).
		\bibitem[MB1]{MB1} M. Bresar, Near-derivations in Lie algebras, J. Algebra 320, 3765-3772 (2008).
\bibitem[BMM]{BMM} M. Bresar, W. S. Martindale, C. R. Miers, Centralizing maps in prime rings with involution, J. Algebra
		161, 342-357 (1993).
\bibitem[BZ1]{BZ1} M. Bresar, K.Zhao, Biderivations and commuting linear maps on Lie algebras, J. Lie Theory 28,
		885-900 (2018).	
\bibitem[DB]{DB} D. Burde, Left-symmetric algebras, or pre-Lie algebras in geometry and physics, Cent. Eur. J. Math.
		4(3), 323-357 (2006).
		\bibitem[BD]{BD} D. Burde, K. Dekimpe, Post-Lie algebra structures on pairs of Lie algebras, J. Algebra 464, 226-245
		(2016).
		\bibitem[BM]{BM} D. Burde, W. A. Moens, Commutative post-Lie algebra structures on Lie algebras, J. Algebra 467,
		183-201, (2016).
		\bibitem[BZ]{BZ} D. Burde, P. Zusmanovich, Commutative post-Lie algebra structures on Kac-Moody algebras. Comm. Algebra 47(12), 5218-5226, (2019).
		\bibitem[CYZ]{CYZ} Q. Chen, Y. Yao, K. Zhao, Biderivations of Lie algebras, Can. Math. Bull. 68(2), 440-450 (2025).
	
		\bibitem[ES]{ES} J. Ecker, M. Schlichenmaier, The Vanishing of the low dimensional cohomology of the Witt and the Virasoro algebra, http://arxiv.org/abs/1707.06106v2.
		\bibitem[FR]{FR} R. Farnsteiner,  Derivations and extensions of finitely generated graded Lie algebras, J. Algebra, 118, 34-45 (1988).
		\bibitem[FLMM]{FLMM}  K. Ebrahimi-Fard, A. Lundervold, I. Mencattini, H. Munthe-Kaas, Post-Lie algebras and isospectral
		flows, SIGMA Symmetry Integrability Geom. Methods Appl. 11, 093 (2015).
		\bibitem[LLZ]{LLZ} S. Liu, D. Liu, Y. Zhao, Symmetric biderivations on complex semisimple Lie algebras, to appear in  Algebra Colloq., 	arXiv:2407.05581.	
	\bibitem[XT]{XT} X. Tang, Biderivations and commutative post-Lie algebra structures on the Lie algebra $W(a,b)$, Taiwanese J. Math. 22(6), 1347-1366 (2018).
		\bibitem[XT1]{XT1} X. Tang, Biderivations, commuting maps and commutative post-Lie algebra structures on W-algebras,
		Commun. Algebra 45(12), 5252-5261 (2017).
		\bibitem[XT2]{XT2}  X. Tang, Biderivations of finite-dimensional complex simple Lie algebras, Linear Multilinear A. 66(2),
		250-259 (2018).
		\bibitem[TY]{TY} X. Tang, Y. Yang, Biderivations of the higher rank Witt algebra without anti-symmetric condition,
		Open Math. 16, 447-452 (2018).
	
		\bibitem[WY]{WY} Y. Wu, X. Yue,   Skew-symmetric biderivation on the affine-Virasoro algebra of type $A_1$ and their applications, Comm. Algebra, Vol. 50(2), 770-783 (2022).
		\bibitem[WYC]{WYC}  D. Wang, X. Yu, Z. Chen, Biderivations of parabolic subalgebras of simple Lie algebras, Commun.
		Algebra 39(11), 4097-4104 (2011).
	\bibitem[WC]{WC} C. Weibel, An Introduction to Homological Algebra. Cambridge Studies in
		Advanced Mathematics, (1994) Cambridge University Press.
		

\bibitem[VB]{VB} B. Vallette,  Homology of generalized partition posets. J. Pure Appl. Algebra,  208(2), 699-725 (2007).	
	\end{thebibliography}
\end{document}